\def\G{\Gamma}
\def\l{\lambda}
\def\m{\mu}
\def\f{\rightarrow}
\def\<{\langle}
\def\>{\rangle}
\def\F{\displaystyle\frac}
\newtheorem{theorem}{Theorem}[section]
\newtheorem{lemma}{Lemma}[section]
\newtheorem{remark}{Remark}[section]
\newtheorem{definition}{Definition}[section]
\newtheorem{notation}{Notation}[section]
\begin{document}
\vspace*{0cm}
\begin{center}

{\Large \bf A semantical proof of the strong normalization theorem for 
full propositional classical natural deduction}\\ [1cm]

{\bf  Karim NOUR and Khelifa SABER} \\ 
 LAMA - Equipe de logique \\
  Universit\'e de Chamb\'ery\\
 73376 Le Bourget du Lac\\
e-mail : $\{$knour,ksabe$\}$@univ-savoie.fr\\[0.5cm]

\end{center}

{\small {\bf Abstract} {\em We give in this paper a short semantical 
proof of the strong
  normalization for full propositional classical natural deduction. 
This
  proof is an adaptation of reducibility candidates introduced by
 J.-Y. Girard and simplified to the classical case by M. Parigot.  }}

\section{Introduction}

This paper gives a semantical proof of the strong normalization of the
cut-elimination procedure for full propositional classical logic
written in natural deduction style.  By full we mean that all the 
logical
connectives ($\perp$, $\f$, $\wedge$ and $\vee$) are considered as
primitive. We also consider the three reduction relations (logical, 
commutative
and classical reductions) necessary to obtain the subformula  property
(see \cite{deG2}).

Until very recently (see the introduction of \cite{deG2} for a
brief history), no proof of the strong normalization of the
cut-elimination procedure was known for full logic.

In \cite{deG2}, Ph. De Groote gives such a proof by using a CPS-style
transformation from full classical logic to implicative intuitionistic
logic, i.e., the simply typed $\l$-calculus.  

A very elegant and direct proof of the strong normalization of the
full logic is given in
\cite{JoMa} but only the intuitionistic case is given.

R. David and the first author give in \cite{dav2} a direct and
syntactical proof of this result. This proof is based on a
characterization of the strongly normalizable deductions and a
substitution lemma which stipulates the fact that the deduction
obtained while replacing in a strongly normalizable deduction an
hypothesis by another strongly normalizable deduction is also strongly
normalizable. The same idea is used in \cite{davnou} to give a
short proof of the strong normalization of the simply typed
$\l\m$-calculus of \cite{Par1}.

R. Matthes recently found another semantical proof of this result (see 
\cite{matthes}). His proof
uses a complicated concept of saturated subsets of terms. 

Our proof is a generalization of M. Parigot's strong normalization
result of the $\lambda\mu$-calculus (see \cite{Par2}) for the types of
J.-Y. Girard's system ${\cal F}$ using reducibility candidates. We
also use a very technical lemma proved in \cite{dav2} concerning
commutative reductions. To the best of our knowledge, this is the
shortest proof of a such result. 

The paper is organized as follows. In section 2, we give the syntax of
the terms and the reduction rules. In section 3, we define the
reducibility candidates and establish some important properties. In
section 4, we show an ``adequation lemma'' which allows to prove the
strong normalization of all typed terms.

\section{The typed system}

We use notations inspired by the paper \cite{and}. 

\begin{definition}
\begin{enumerate}
\item The types are built from propositional variables and the constant symbol $\perp$ with the
connectors $\f$, $\wedge$ and $\vee$.

\item Let $\mathcal{X}$ and $\mathcal{A}$ be two disjoint alphabets
for distinguishing the $\lambda$-variables and $\mu$-variables
respectively.  We code deductions by using a set of terms
$\mathcal{T}$ which extends the $\l$-terms and is given by the
following grammars:
\begin{center}
$
\mathcal{T} \; := \;\mathcal{X} \; | \; 
\lambda\mathcal{X}.\mathcal{T}\;
|\; (\mathcal{T}\;\;\mathcal{E}) \; | \; \< \mathcal{T},\mathcal{T} \> 
\; | \;$$
\omega_1 \mathcal{T}$$ \; |\;$$ \omega_2 \mathcal{T}$$ \; | \;
\mu\mathcal{A}.\mathcal{T} \; | \; (\mathcal{A}\; \; \mathcal{T})
$

$
\mathcal{E} \; := \; \mathcal{T} \; | \; $$\pi_1$$ \; | \;$ $\pi_2 $$\; 
|
\; [\mathcal{X}.\mathcal{T},\mathcal{X}.\mathcal{T}]
$
\end{center}

An element of the set $\mathcal{E}$ is said to be an 
$\mathcal{E}$-term.

\item  The meaning of the new constructors is given by the typing rules
below where $\G$ (resp. $\Delta$) is a context, i.e. a set of
declarations of the form $x : A$ (resp. $a : A$) where $x$ is a
$\l$-variable (resp. $a$ is a $\m$-variable) and $A$ is a type.
 
\begin{center}
 $\F{}{\Gamma, x:A\,\, \vdash x:A\,\, ; \, \Delta}{ax}$
\end{center}

\begin{center}
$\F{\Gamma, x:A \vdash t:B;\Delta}{\Gamma \vdash \lambda x.t:A \to 
B;\Delta}{\to_i}
\quad\quad\quad 
\F{\Gamma \vdash u:A \to B;\Delta \quad \Gamma \vdash
v:A;\Delta}{\Gamma\vdash (u\; \; v):B;\Delta}{\to_e}$
\end{center}

\begin{center}
$\F{\Gamma \vdash u:A;\Delta \quad \Gamma \vdash v:B ; \Delta}{\Gamma
\vdash \<u,v\>:A \wedge B ; \Delta}{\wedge_i}$
\end{center}

\begin{center}
$\F{\Gamma \vdash t:A \wedge B ; \Delta}{\Gamma \vdash (t\;\;\pi_1):A ;
\Delta}{\wedge^1_e} 
\quad 
\F{\Gamma \vdash t:A\wedge B ; \Delta}{\Gamma \vdash (t\;\;\pi_2):B ; 
\Delta}{\wedge^2_e}$
\end{center}

\begin{center}
$\F{\Gamma\vdash t:A;\Delta}{\Gamma\vdash \omega_1
    t:A \vee B ;\Delta}{\vee^1_i}
\quad
\F{\Gamma \vdash t:B; \Delta}{\Gamma\vdash \omega_2 t:A\vee B 
;\Delta}{\vee^2_i}$
\end{center}

\begin{center}
$\F{\Gamma \vdash t:A\vee B ;\Delta\quad\Gamma, x:A \vdash u:C ;
\Delta\quad\Gamma, y:B \vdash v:C ; \Delta}{\Gamma \vdash (t\;\;[x.u, 
y.v]):C ; \Delta}{\vee_e}$ 
\end{center}

\begin{center}
$\F{\Gamma\vdash t:A ;\Delta, a:A}{\Gamma \vdash (a\;\;t):\bot ;\Delta,
a:A}{abs_i}
\quad
\F{\Gamma\vdash t:\bot; \Delta, a:A}{\Gamma \vdash\mu a.t:A; 
\Delta}{abs_e}$
\end{center}

\item The cut-elimination procedure corresponds to the  reduction rules 
given bellow. There are three kinds of cuts:

\begin{enumerate}

\item The logical cuts: They appear when the introduction of a 
connective is immediately followed by its elimination. The corresponding rules 
are:

\begin{itemize}
\item $(\lambda x.u \;\; v) \triangleright u[x:=v]$

\item $(\<t_1,t_2\>\;\;\pi_i) \triangleright t_i$

\item $(\omega_i t\;\;[x_1.u_1,x_2.u_2]) \triangleright  u_i[x_i:=t]$

\end{itemize}

\item The permutative cuts: They appear when the elimination of the 
disjunction
is followed by the elimination rule of a connective.The corresponding 
rule is:

\begin{itemize}

\item $((t\;\;[x_1.u_1,x_2.u_2])\;\;\varepsilon) \triangleright
(t\;\;[x_1.(u_1\; \varepsilon),x_2.(u_2\;\varepsilon)])$

\end{itemize}

\item The classical cuts: They appear when the classical rule is 
followed by the elimination rule of a connective. The corresponding rule is:

\begin{itemize}

\item $(\m a.t\;\; \varepsilon) \triangleright \m
  a.t[a:=^*\varepsilon]$,  where $ t[a:=^*\varepsilon]$ is obtained 
from $t$ by replacing
inductively each subterm in the form $(a \; v)$ by $(a \; (v \; 
\varepsilon))$. 
\end{itemize}
\end{enumerate}
\end{enumerate}

\end{definition}

\begin{notation}
 Let $t$ and $t'$ be $\mathcal{E}$-terms. The notation $t
 \triangleright t'$ means that $t$ reduces to $t'$ by using one step
 of the reduction rules given above. Similarly, $t \triangleright^*
 t'$ means that $t$ reduces to $t'$ by using some steps of the
 reduction rules given above.
\end{notation}

The following result is straightforward.

\begin{theorem}
If $\G \vdash t : A ; \Delta$ and $t \triangleright^* t'$ then $\G 
\vdash t' : A ; \Delta$.
\end{theorem}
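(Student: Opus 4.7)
The proof follows the standard pattern for subject reduction. First, by induction on the length of the reduction sequence $t \triangleright^* t'$, it suffices to treat the one-step case $t \triangleright t'$. For this I would proceed by induction on $t$ and case-split on whether the contracted redex is the whole of $t$ or lies in a proper subterm. The subterm cases are routine: apply the induction hypothesis to the reduced subterm and reapply the enclosing typing rule.

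The interesting work lies in the head cases, corresponding to the three groups of rules in Definition 1.4. The three logical rules require the usual substitution lemma, namely: if $\G, x:B \v u:A;\Delta$ and $\G \v v:B;\Delta$ then $\G \v u[x:=v]:A;\Delta$ (itself proved by a straightforward induction on $u$). This takes care of both the $\l$-redex and the $\omega_i$-redex, while the projection case $(\<t_1,t_2\>\;\;\pi_i) \triangleright t_i$ is immediate by inverting the $\wedge_i$ rule. The permutative cut is handled by rearranging the derivation: from a typing of $((t\;\;[x_1.u_1,x_2.u_2])\;\;\varepsilon)$ one extracts the typing data of $\varepsilon$ and reuses it inside the two branches $u_1,u_2$ before applying $\vee_e$.

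The only delicate step is the classical cut $(\m a.t\;\;\varepsilon) \triangleright \m a.t[a:=^*\varepsilon]$. This requires an auxiliary ``$\m$-substitution'' lemma: if $\varepsilon$ is an $\mathcal{E}$-term whose associated elimination rule sends a term of type $C$ to one of type $C'$, then from $\G \v t:\perp;\Delta,a:C$ one can derive $\G \v t[a:=^*\varepsilon]:\perp;\Delta,a:C'$. The four sub-cases on the shape of $\varepsilon$---an argument $v:A$ with $C = A \f C'$, a projection $\pi_i$ with $C$ a conjunction, or a case-analysis $[x.u_1,y.u_2]$ with $C$ a disjunction and $C'$ the common conclusion---match exactly the four elimination rules. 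The lemma is proved by a short induction on $t$; the only active step is the subterm $(a\;v)$, where the replacement $(a\;(v\;\varepsilon))$ is typed using the data of $\varepsilon$ together with the appropriate elimination rule. Applying $abs_e$ to the conclusion then closes the classical-cut case, and this $\m$-substitution lemma is the only real obstacle in the whole argument.
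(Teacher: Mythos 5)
Your proposal is correct, and it is worth noting that the paper itself offers no proof at all here --- it simply declares the result ``straightforward'' --- so what you have written is exactly the elaboration the authors leave to the reader: the standard subject-reduction argument via a substitution lemma for the logical cuts, derivation rearrangement for the permutative cut, and a $\mu$-substitution lemma for the classical cut. One small technical remark: for the induction on $t$ in your $\mu$-substitution lemma to go through, you must state it for an arbitrary conclusion type $D$ (if $\G \v t:D;\Delta,a:C$ then $\G \v t[a:=^{*}\varepsilon]:D;\Delta,a:C'$), not just for $D=\perp$, since the subterm $v$ in $(a\;v)$ has type $C$ rather than $\perp$; similarly the permutative case silently uses weakening to re-type $\varepsilon$ under the extra hypotheses $x_i:A_i$. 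Both are routine strengthenings, not gaps.
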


We have also the confluence property (see \cite{and}, \cite{deG2} and
\cite{nour}).

\begin{theorem} If $t\triangleright^* t_1$ and $t\triangleright^* t_2$, 
then there exists $t_3$ such that $t_1\triangleright^* t_3$ and 
$t_2\triangleright^* t_3$.

\end{theorem}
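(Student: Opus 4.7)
The plan is to prove confluence by the classical Tait--Martin-L\"of method of parallel reduction, since strong normalization is not yet available at this stage of the paper (so Newman's lemma is not an option). I would introduce a one-step parallel reduction relation $\Rightarrow$ on $\mathcal{T}\cup\mathcal{E}$ defined inductively so that (i) every variable reduces to itself, (ii) reduction is compatible with every term constructor, and (iii) each of the logical, permutative and classical redexes may be contracted provided their immediate subterms have already been reduced in parallel. The first routine checks are that $\triangleright\;\subseteq\;\Rightarrow\;\subseteq\;\triangleright^{*}$ and that parallel reduction is reflexive, from which $\triangleright^{*}$ and $\Rightarrow^{*}$ coincide. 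Once the diamond property
\[
t\Rightarrow t_{1}\ \text{and}\ t\Rightarrow t_{2}\ \Longrightarrow\
\exists t_{3}\ (t_{1}\Rightarrow t_{3}\ \text{and}\ t_{2}\Rightarrow t_{3})
\]
is established for $\Rightarrow$, a standard tiling argument lifts it to $\Rightarrow^{*}=\triangleright^{*}$, giving the theorem.

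Before attacking the diamond, I would prove the two substitution lemmas on which every critical case rests: if $t\Rightarrow t'$ and $v\Rightarrow v'$ then $t[x:=v]\Rightarrow t'[x:=v']$, and analogously if $t\Rightarrow t'$ and $\varepsilon\Rightarrow\varepsilon'$ then $t[a:=^{*}\varepsilon]\Rightarrow t'[a:=^{*}\varepsilon']$. The first is the usual lemma for the $\lambda$-calculus extended to pairs, injections and case-terms. The second is the genuinely new piece: one proves it by induction on $t$, using that the structural substitution commutes with ordinary substitution and with itself, i.e.\ $t[a:=^{*}\varepsilon][x:=v]\equiv t[x:=v[a:=^{*}\varepsilon]][a:=^{*}\varepsilon]$ when $x\notin\mathrm{FV}(\varepsilon)$, and similarly for iterated $\mu$-substitutions.

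The diamond property itself is then proved by induction on the derivation of $t\Rightarrow t_{1}$, splitting on the last rule and on the shape of the parallel step that produced $t_{2}$. The easy cases are when both reductions occur in disjoint positions; the interesting ones are the overlaps. I expect the main obstacle to be the interaction between the permutative reduction $(t\;[x_{1}.u_{1},x_{2}.u_{2}])\;\varepsilon\triangleright t\;[x_{1}.(u_{1}\;\varepsilon),x_{2}.(u_{2}\;\varepsilon)]$ and the classical reduction $(\mu a.t\;\varepsilon)\triangleright\mu a.t[a:=^{*}\varepsilon]$: both duplicate the continuation $\varepsilon$ and may nest or commute with each other in several ways, and one has to check that when an outer classical cut is pushed through an inner case-analysis (or vice versa), the two parallel residuals still join after one more parallel step. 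This is settled using the structural substitution lemma above together with a case-by-case inspection of the overlapping redexes.

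Once the diamond for $\Rightarrow$ is established, confluence of $\triangleright^{*}$ follows by the standard strip lemma and a double induction on the length of the two reduction sequences, so the theorem is proved. I would in fact simply refer the reader to \cite{and}, \cite{deG2} and \cite{nour}, where this program has already been carried out in full detail for exactly the present calculus.
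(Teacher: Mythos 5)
The paper offers no proof of this theorem at all --- it simply points to \cite{and}, \cite{deG2} and \cite{nour} --- and your proposal ends by doing exactly the same thing, so you are taking essentially the paper's own route; the parallel-reduction sketch you add is a fair outline of what those references actually carry out. One caveat worth keeping in mind if you ever execute the sketch yourself: for $\mu$-calculi the \emph{naive} parallel reduction does not satisfy the diamond property (nested classical cuts $((\mu a.t\;\varepsilon_1)\;\varepsilon_2)$ and the permutative/classical overlaps force a refined definition of the parallel step, as in Andou's treatment), so the ``routine'' diamond case analysis is precisely where the real work hides.
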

\begin{definition}
An $\mathcal{E}$-term $t$ is said to be strongly normalizable if there
is no infinite sequence $(t_i)_{i<\omega}$ of $\mathcal{E}$-terms such
that $t_0 =t$ and $t_i \triangleright t_{i+1}$ for all $i<\omega$. 
\end{definition}

The aim of this paper is to prove the following theorem.

\begin{theorem}\label{SN}
Every typed term is strongly normalizable.
\end{theorem}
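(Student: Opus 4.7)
The plan is to use the Tait--Girard reducibility method in the form Parigot adapted to classical natural deduction with $\mu$. First, I would define a reducibility candidate to be a set $\mathcal{C}$ of strongly normalizing $\mathcal{E}$-terms which is closed under reduction and which contains every \emph{neutral} term (one not of the form $\lambda x.u$, $\langle u,v\rangle$, $\omega_i u$ or $\mu a.u$, so that it cannot initiate a logical, commutative, or classical cut when placed in an elimination context) all of whose one-step reducts already lie in $\mathcal{C}$. Since variables are neutral with no reducts, every candidate contains the variables.

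Next I would interpret each type $A$ as a candidate $|A|$ in the standard negative style driven by its elimination rules: $|A\to B|=\{t : (t\;v)\in|B|\text{ for every }v\in|A|\}$; $|A\wedge B|=\{t : (t\;\pi_1)\in|A|\text{ and }(t\;\pi_2)\in|B|\}$; $|A\vee B|=\{t : (t\;[x.u_1,y.u_2])\in|C|\text{ whenever the two branches send }|A|,|B|\text{ into }|C|\text{ compatibly}\}$; and $|\bot|$ equal to the set of all strongly normalizing $\mathcal{E}$-terms. Routine verifications show each $|A|$ is indeed a candidate.

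The heart of the argument is the adequation lemma: if $\Gamma\vdash t:A;\Delta$, if every $\lambda$-variable $x_i:A_i$ of $\Gamma$ is replaced by a term of $|A_i|$, and if every $\mu$-variable $a_j:B_j$ of $\Delta$ is replaced in Parigot's style (so that each attack $(a_j\;v)$ behaves adequately with respect to $|B_j|$), then the resulting term lies in $|A|$. One proves this by induction on the typing derivation. Applied to the identity substitution, the lemma yields Theorem~\ref{SN}, since the elements of any candidate are strongly normalizing.

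Two points require special care. The disjunction case must accommodate the commutative reduction $((t\;[x_1.u_1,x_2.u_2])\;\varepsilon)\triangleright(t\;[x_1.(u_1\;\varepsilon),x_2.(u_2\;\varepsilon)])$, whose reduct is not a subterm of the redex and therefore escapes the usual CR3 argument; here I would invoke the technical commutative-reductions lemma of \cite{dav2} that the authors announce in the introduction. The classical cut $(\mu a.t\;\varepsilon)\triangleright\mu a.t[a:=^*\varepsilon]$ forces the adequation statement to be phrased with $\mu$-substitutions parameterized by arbitrary $\mathcal{E}$-term stacks, so that appending a new component $\varepsilon$ to the stack assigned to $a$ simulates the reduction on every occurrence $(a\;v)$. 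Making the definition of $|A|$ and the notion of admissible $\mu$-substitution cooperate so that both of these phenomena go through simultaneously for $\to$, $\wedge$, $\vee$ and $\bot$ is the principal technical obstacle.
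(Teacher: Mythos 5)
Your overall architecture---an elimination-driven interpretation of types, an adequation lemma with $\lambda$-substitutions in $|A_i|$ and Parigot-style $\mu$-substitutions by stacks, and an appeal to the commutative-reductions lemma of \cite{dav2}---is the same as the paper's. But there is a genuine gap, and it sits exactly where you say the ``principal technical obstacle'' is: you name the difficulty without resolving it, and the candidate definition you start from (Girard's CR1--CR3 with neutral terms) is precisely the formulation that breaks down here. Concretely: $\mu a.t$ is \emph{not} neutral in your sense, so the CR3 clause gives you nothing when you must show $\mu a.t\in|A|$ in the $abs_e$ case; and in the $\vee_e$ case the commutative reduct $(t\,[x_1.(u_1\,\varepsilon),x_2.(u_2\,\varepsilon)])$ is not reached by reducing a subterm of the redex, so the induction ``all one-step reducts of a neutral term are in $\mathcal{C}$'' does not close either. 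For an \emph{arbitrary} CR1--CR3 candidate there is no reason these memberships hold, so ``routine verifications show each $|A|$ is a candidate'' is not enough: you need a stronger structural property of the candidates you actually use.

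The paper's resolution, which is the real content of Parigot's simplification alluded to in the abstract, is to take $\mathcal{R}$ to be the \emph{smallest} family containing $\mathcal{N}$ and closed under the three constructions, and then to prove that every $R\in\mathcal{R}$ has the form $R=R^\perp\to\mathcal{N}$ for a set $R^\perp$ of \emph{nice} stacks (sequences of strongly normalizable $\mathcal{E}$-terms containing at most one case-analysis term $[x.u,y.v]$, necessarily in last position). Membership $t\in R$ thereby reduces to the single statement $(t\;\bar{w})\in\mathcal{N}$ for all nice $\bar{w}\in R^\perp$, and the two problematic cases are discharged by concrete combinatorial lemmas about nice stacks: lemma \ref{int}(5) shows $(\mu a.t\;\bar{w})\in\mathcal{N}$ follows from $t[a:=^*\bar{w}]\in\mathcal{N}$ (by a measure on $(lg(\bar{w}),\eta(t[a:=^*\bar{w}])+\eta(\bar{w}))$), and lemma \ref{delta} (the result from \cite{dav2}) transfers $(t\,[x.(u\,\bar{w}),y.(v\,\bar{w})])\in\mathcal{N}$ to $((t\,[x.u,y.v])\,\bar{w})\in\mathcal{N}$. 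The restriction to \emph{nice} stacks is itself essential (lemma \ref{int}(1) would otherwise fail, since two case-analysis terms in a stack could interact through permutative cuts), and nothing in your proposal supplies this device or a substitute for it. To repair your proof you would have to either establish the $X\to\mathcal{N}$ characterization as the paper does, or add the $\mu$- and commutative-closure conditions as explicit axioms on candidates and then verify them for each of $\to$, $\wedge$, $\vee$, $\perp$---which amounts to the same work.
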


In the rest of the paper we consider only typed terms.

\section{Reducibility candidates}

\begin{lemma}\label{sub}
Let $t,u$ and $u'$ be $\mathcal{E}$-terms such that $u\triangleright 
u'$, then:
 \begin{enumerate}
   \item $u[x:=t]\triangleright u'[x:=t]$ and $u[a:=^*t] \triangleright 
u'[a:=^*t]$.
   \item $t[x:=u] \triangleright^* t[x:=u']$ and $t[a:=^*u] 
\triangleright^* t[a:=^*u']$. 
 \end{enumerate}
\end{lemma}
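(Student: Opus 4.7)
The plan is to prove the two clauses separately, using two different inductions. For clause (1), I would proceed by induction on the derivation of the one-step reduction $u \triangleright u'$ (equivalently, on the position of the contracted redex in $u$). For each base case — the three logical cuts, the permutative cut, and the classical cut — I would show that the substitution commutes with contraction, so that the substituted term reduces in exactly one step by the same rule. For example, for $u = (\lambda y.v_1 \;\; v_2) \triangleright v_1[y:=v_2] = u'$, after renaming so that $y$ is fresh for $t$ and $x$, one has $u[x:=t] = (\lambda y.v_1[x:=t] \;\; v_2[x:=t])$, which contracts in one step to $v_1[x:=t][y:=v_2[x:=t]]$, equal to $u'[x:=t]$ by the standard commutation of ordinary substitutions. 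The induction step is immediate since each term constructor preserves one-step reduction in any of its arguments. The case of $[a:=^*t]$ is handled identically, the only new point being that $[a:=^*t]$ commutes with both $[y:=s]$ and $[b:=^*s]$ on subterms not involving $a$, which is straightforward by induction on terms.

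For clause (2), I would argue by induction on the structure of $t$. The only interesting base cases are $t = x$ (where $t[x:=u] = u \triangleright u' = t[x:=u']$, giving exactly one step) and $t = (a\;v)$ for the parallel case (where $t[a:=^*u] = (a\;(v[a:=^*u]\;u))$, and by the inductive hypothesis on $v$ together with the ability to reduce $u$ to $u'$ inside the application, we obtain $t[a:=^*u] \triangleright^* (a\;(v[a:=^*u']\;u')) = t[a:=^*u']$). For every other constructor, the result follows directly from the inductive hypotheses applied to immediate subterms, using the fact that reduction is a congruence with respect to all the term-forming operations. The reason we need $\triangleright^*$ rather than $\triangleright$ is that the bound variable may occur several times in $t$, and each occurrence contributes an independent reduction from the corresponding copy of $u$ to $u'$.

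The only real obstacle I foresee is the bookkeeping for the classical substitution $[a:=^*\varepsilon]$, which is not a plain replacement but wraps each subterm of the form $(a\;v)$ into $(a\;(v\;\varepsilon))$. This requires a preliminary (and completely routine) commutation lemma of the form $u[x:=t][a:=^*\varepsilon] = u[a:=^*\varepsilon][x:=t[a:=^*\varepsilon]]$ and similar identities for two nested classical substitutions, proved by an easy induction on $u$ with the usual freshness conventions. Once these commutations are in hand, the verification for the classical cut in clause (1) and for the case $t=(a\;v)$ in clause (2) reduces to pure syntactic manipulation, and the lemma follows.
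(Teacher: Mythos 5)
Your proposal is correct and follows essentially the same route as the paper, which simply proves clause (1) by induction on $u$ (equivalently, on the position of the contracted redex, as you phrase it) and clause (2) by induction on the structure of $t$. The commutation identities for $[a:=^*\varepsilon]$ that you flag are indeed the only delicate bookkeeping, and your treatment of them is sound.
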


\begin{proof}  
1) By induction on $u$. 2) By induction on $t$.
\end{proof} 

\begin{notation}
The set of strongly normalizable terms (resp.  $\mathcal{E}$-terms) is
denoted by $\mathcal{N}$ (resp.  $\mathcal{N'}$). If $t\in
\mathcal{N'}$, we denoted by $\eta(t)$ the maximal length of the
reduction sequences of $t$.We denote also $\mathcal{N'}^{< \omega}$ the 
set of finite sequences of $\mathcal{N'}$.
\end{notation}

\begin{definition}
Let $\bar{w}=w_1 ... w_n \in \mathcal{N'}^{< \omega}  $, we
say that $\bar{w}$ is a nice sequence iff $w_n$ is the only
$\mathcal{E}$-term in $\bar{w}$ which can be in the
form $[x.u,y.v]$.
\end{definition}

\begin{remark} The intuition behind the notion of the nice sequences
will be given in the proof of the lemma \ref{int}.
\end{remark}

\begin{lemma}\label{nice}
Let $\bar{w}=w_1 ... w_n$ be a nice sequence and $\bar{w'}=w_1
...w_i'... w_n$ where $w_i \triangleright w'_i$. Then $\bar{w'}$ is
also a nice sequence.
\end{lemma}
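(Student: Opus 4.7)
The plan is to proceed by a simple case analysis on the position $i$ of the reduced term, using the key structural observation that one-step reduction preserves the top-level syntactic class of an $\mathcal{E}$-term.

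First I would record the following easy fact: every one-step reduction rule $\triangleright$ in Definition~1.1(4) takes a $\mathcal{T}$-term to a $\mathcal{T}$-term (this is visible by inspection of each of the logical, permutative, and classical rules, whose redexes all sit in $\mathcal{T}$). Since $\triangleright$ is defined as the contextual closure of these rules, the only way to reduce an $\mathcal{E}$-term that is not already a $\mathcal{T}$-term is to reduce strictly inside it. The non-$\mathcal{T}$ forms of $\mathcal{E}$-terms are $\pi_1$, $\pi_2$, and $[x.u,y.v]$. The projections $\pi_1,\pi_2$ are irreducible, and any reduction of $[x.u,y.v]$ occurs inside $u$ or $v$, yielding again something of the shape $[x.u',y.v]$ or $[x.u,y.v']$. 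Therefore, if $w \triangleright w'$ then $w$ and $w'$ lie in the same one of the three classes: $\mathcal{T}$-term, a projection $\pi_j$, or a case-analysis $\mathcal{E}$-term $[x.\_,y.\_]$.

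With this in hand, the lemma is almost immediate. If $i=n$, nothing needs to be checked: the terms $w_1,\dots,w_{n-1}$ are unchanged, and by hypothesis they are not of the form $[x.u,y.v]$, so $\bar{w'}$ is still nice (independently of what $w_n'$ looks like). If $i<n$, the niceness of $\bar{w}$ tells us that $w_i$ is not of the form $[x.u,y.v]$, so $w_i$ is either a $\mathcal{T}$-term or one of $\pi_1,\pi_2$. In the latter case $w_i$ admits no reduction, so this situation does not occur. In the former case the observation above guarantees that $w_i'$ is again a $\mathcal{T}$-term, hence not of the form $[x.u,y.v]$, and the remaining entries of $\bar{w'}$ coincide with those of $\bar{w}$, so $\bar{w'}$ is nice.

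There is no real obstacle here; the only point that requires a moment of care is the structural remark that reduction cannot turn a $\mathcal{T}$-term into an $\mathcal{E}$-term of the shape $[x.u,y.v]$, which is what makes the niceness property stable under one-step reduction of any of the $w_i$.
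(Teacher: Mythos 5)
Your proof is correct and rests on the same key observation as the paper's: a one-step reduction cannot produce a term of the form $[x.u,y.v]$ from a term not already of that form (the paper phrases this contrapositively as ``if $\varepsilon \triangleright [x.u,y.v]$ then $\varepsilon = [x.p,y.q]$''). The case analysis on $i$ that you add is implicit in the paper's one-line argument, so the two proofs are essentially identical.
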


\begin{proof} 
 This comes from the fact that if $\varepsilon \triangleright 
[x.u,y.v]$
then $\varepsilon = [x.p,y.q]$, where $p \triangleright u$ or
$q\triangleright v$.
\end{proof}

\begin{notation}
\begin{enumerate}
\item The empty sequence is denoted by $\emptyset$. 
\item Let $\bar{w}=w_1 ... w_n$ a sequence of $\mathcal{E}$-terms and
  $t$ a term. Then $(t\;\,\bar{w})$ is $t$ if $n=0$ and $((t \; w_1)
  \; w_2 ... w_n)$ if $n\neq 0$. The term $t[a:=^*\bar{w}]$ is
  obtained from $t$ by replacing inductively each subterm in the form
  $(a \; v)$ by $(a \; (v \;\bar{w}))$.
\item If $\bar{w}=w_1 ... w_n$ is a nice sequence, we denote $\eta
  (\bar{w})=\sum_{i=1}^n\eta(w_i)$.
\end{enumerate}
\end{notation}

\begin{lemma}\label{int}
 Let $\bar{w}$ be a nice sequence. 
     \begin{enumerate}
     \item $(x \, \bar{w})\in \mathcal{N}$.
     \item If $u \in \mathcal{N}$ and $(t[x:=u] \; \bar{w})\in
     \mathcal{N}$, then $((\lambda x.t \; u)\;\bar{w})\in \mathcal{N}$.
     \item If $t_1,t_2 \in \mathcal{N}$ and $(t_i \; \bar{w})\in
     \mathcal{N}$, then $((\<t_1,t_2\> \; \pi_i)\;\bar{w})\in 
\mathcal{N}$. 
     \item If $t,u_1,u_2 \in \mathcal{N}$ and $u_i[x_i:=t] \in 
\mathcal{N}$, then $(\omega_i t\;\;[x_1.u_1,x_2.u_2])\in \mathcal{N}$.
     \item If $t[a:=^*\bar{w}] \in \mathcal{N}$, then $(\mu 
a.t\;\bar{w})\in \mathcal{N}$. 
 \end{enumerate}
\end{lemma}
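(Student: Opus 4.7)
The plan is to prove all five parts simultaneously by induction on a suitable measure built from $\eta$-values; in each item, one enumerates the possible one-step reductions of the displayed term and verifies that each reduct still lies in $\mathcal{N}$ using either a hypothesis of the statement, the closure of $\mathcal{N}$ under reduction combined with Lemma \ref{sub}, or the induction hypothesis. Niceness of $\bar{w}$ plays a uniform role: it forbids a permutative redex $((t\;[x_1.u_1,x_2.u_2])\;\varepsilon)$ from ever arising from within $\bar{w}$, so the only reductions one needs to analyze are the head logical or classical cut, a reduction in one of the named subterms $t, u, t_i, u_i$, and an internal reduction of some $w_i$ (which keeps the sequence nice by Lemma \ref{nice}).

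For parts 1--4 this pattern is essentially mechanical. In part 1 the head is a variable, so no head cut ever fires and one inducts on $\eta(\bar{w})$. For parts 2 and 3 the head logical cut directly yields the hypothesized term $(t[x:=u]\;\bar{w})$ or $(t_i\;\bar{w})$, while reductions inside $t$, $u$, $t_1$ or $t_2$ transport through the substitution via Lemma \ref{sub}; induction is on a sum such as $\eta(u)+\eta((t[x:=u]\;\bar{w}))$ for part 2. Part 4 has no appended sequence, so no permutative cut is possible at all; induction on $\eta(t)+\eta(u_1)+\eta(u_2)+\eta(u_i[x_i:=t])$ works, with the head logical cut discharging the $\omega_i$-case by hypothesis.

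The main obstacle is part 5, because the head classical cut both reshapes $t$ via the substitution $[a:=^*w_1]$ and shortens the sequence. The key observation is the associativity-like identity $t[a:=^*w_1][a:=^*(w_2,\ldots,w_n)] = t[a:=^*\bar{w}]$, which follows directly from the inductive definition of $[a:=^*-]$, together with the fact that the truncated sequence $(w_2,\ldots,w_n)$ is still nice (since $w_n$ remains the only candidate for a $[x.u,y.v]$-shaped entry). Hence after a head classical cut the hypothesis of part 5 is preserved, while the length of the sequence strictly decreases. This suggests the lexicographic measure $(\eta(t[a:=^*\bar{w}]),\,\eta(\bar{w}),\,|\bar{w}|)$: a reduction inside $t$ strictly decreases the first coordinate (using Lemma \ref{sub}(1)), a reduction inside some $w_i$ decreases the first weakly and the second strictly, and the head classical cut keeps the first unchanged while strictly decreasing the third. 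All cases thus fall to the induction hypothesis, which is the one delicate point of the proof.
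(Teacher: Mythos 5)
Your proposal is correct and follows essentially the same route as the paper: a case analysis of the one-step reducts of each term, with niceness of $\bar{w}$ ruling out permutative interaction inside the sequence, Lemmas \ref{sub} and \ref{nice} handling internal reductions, and the identity $t[a:=^*w_1][a:=^*(w_2\ldots w_n)]=t[a:=^*\bar{w}]$ discharging the head classical cut in part 5. The only difference is cosmetic: for part 5 the paper inducts on the lexicographic pair $(lg(\bar{w}),\,\eta(t[a:=^*\bar{w}])+\eta(\bar{w}))$ whereas you use the triple $(\eta(t[a:=^*\bar{w}]),\,\eta(\bar{w}),\,|\bar{w}|)$; both measures decrease in every case.
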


\begin{proof} 
\begin{enumerate}
\item Let $\bar{w}=w_1...w_n$. All reduction over $(x\;\bar{w})$ take 
place in some $w_i$, because $\bar{w}$ is a nice sequence, and therefore 
the $w_i$ cannot interacte between them via commutative reductions. 
Since all $w_i$ are strongly normalizable, then $(x\;\bar{w})$ itself is 
strongly normalizable. 
\item It suffices to prove that: If $((\lambda x.t
 \;u)\;\bar{w})\triangleright s$, then $s\in \mathcal{N}$. We process
 by induction on $\eta(u) +\eta(t[x:=u]\;\bar{w})$.  Since
 $\bar{w}=w_1...w_n$ is a nice sequence, the $w_i$ cannot interact
 between them via commutative reductions.  We have four possibilities
 for the term $s$.
\begin{itemize}
\item $s = ((\lambda x.t' \;u)\;\bar{w})$ where $t \triangleright t'$: 
By lemma
  \ref{sub}, $(t'[x:=u] \; \bar{w}) \in \mathcal{N}$ and $\eta(u)
 +\eta((t'[x:=u]\;\bar{w})) <  \eta(u)+\eta((t[x:=u]\;\bar{w}))$, then, 
by
  induction hypothesis, $s \in \mathcal{N}$.
\item $s = ((\lambda x.t \;u')\;\bar{w})$ where $u \triangleright u'$: 
By lemma
  \ref{sub}, $(t[x:=u'] \; \bar{w})\in \mathcal{N}$ and $\eta(u')
 +\eta((t[x:=u']\;\bar{w})) <  \eta(u)+\eta((t[x:=u]\;\bar{w}))$, then, 
by
  induction hypothesis,  $s \in \mathcal{N}$.
\item $s = ((\lambda x.t \;u)\;\bar{w'})$ where 
$\bar{w'}=w_1...w'_i...w_n$
 and $w_i \triangleright w'_i$: By lemma \ref{nice}, 
 $\bar{w'}$ is a nice sequence. We have $(t[x:=u] \; \bar{w'})\in
 \mathcal{N}$ and $\eta(u) +\eta((t[x:=u]\;\bar{w'})) <
 \eta(u)+\eta((t[x:=u]\;\bar{w}))$, then, by induction hypothesis, $s
 \in \mathcal{N}$.
\item $s = (t[x:=u] \; \bar{w})$: By hypothesis, $s \in \mathcal{N}$.
\end{itemize}
\item Same proof as 2).
\item Same proof as 2).
\item It suffices also to prove that: If $(\mu 
a.t\;\bar{w})\triangleright
   s$, then $s \in\mathcal{N}$. We process by induction on the pair
   $(lg(\bar{w}),\eta (t[a:=^*\bar{w}]) + \eta(\bar{w}))$ where 
$lg(\bar{w})$ is the
   number of the $\mathcal{E}$-terms in the sequence $\bar{w}$. We have 
three possibilities
 for the term $s$.
\begin{itemize}
\item $s = (\mu a.t'\;\bar{w})$ where $t\triangleright t'$: By lemma
  \ref{sub}, $t'[a:=^*\bar{w}]\in\mathcal{N}$ and
  $\eta(t'[a:=^*\bar{w}]) < \eta(t[a:=^*\bar{w}])$, then, by induction
  hypothesis, $s \in \mathcal{N}$.
\item $s = (\mu a.t\;\bar{w}')$ where $\bar{w'}=w_1...w'_i...w_n$ and 
$w_i
 \triangleright w'_i$: by lemma \ref{nice}, $\bar{w'}$ is a nice
 sequence and, by lemma \ref{sub}, $t[a:=^*\bar{w}']\in\mathcal{N}$
 and $\eta(t[a:=^*\bar{w}']) + \eta(\bar{w}') < \eta(t[a:=^*\bar{w}])
 + \eta(\bar{w})$, then, by induction hypothesis, $s \in \mathcal{N}$.
\item $s = (\mu a.t[a:=^* w_1]\,\bar{w}')$ where
  $\bar{w'}=w_2...w_n$: It is obvious that $\bar{w'}$ is a nice
  sequence and $lg(\bar{w'}) < lg(\bar{w})$. We have $t[a:=^* 
w_1][a:=^*\bar{w'}]=t[a:=^* \bar{w}] \in
  \mathcal{N}$, then, by induction hypothesis, $s \in
  \mathcal{N}$.
\end{itemize}
\end{enumerate} 
\end{proof} 

\begin{lemma}\label{delta}
Let $\bar{w}$ be a nice sequence. 

If $(t \; [x.(u\;\bar{w}),y.(v\;\bar{w})])\in \mathcal{N}$,
then $((t \; [x.u,y.v])\;\bar{w})\in \mathcal{N}$.
\end{lemma}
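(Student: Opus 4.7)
The plan is to prove $((t\;[x.u,y.v])\;\bar{w})\in\mathcal{N}$ by showing that every one-step reduct of it belongs to $\mathcal{N}$, using induction on the lexicographic pair $(lg(\bar{w}),\,\eta(H)+\eta(\bar{w}))$, where $H=(t\;[x.(u\;\bar{w}),y.(v\;\bar{w})])$ denotes the term hypothesized to lie in $\mathcal{N}$. The reducts are classified according to the position and kind of the reduced redex.

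I would first dispose of the reductions strictly inside $t$, $u$, $v$, or some $w_i$: Lemma \ref{sub} exhibits a corresponding reduction $H\triangleright^{*}H'$, and Lemma \ref{nice} preserves the nice-sequence property when the redex is inside some $w_i$; in each situation $H'\in\mathcal{N}$ and the second coordinate of the measure strictly decreases, so the induction hypothesis applies. The logical cut $t=\omega_i s_0$ is equally immediate, since the reduct $(u_i[x_i:=s_0]\;\bar{w})$ appears directly as a one-step reduct of $H$ and hence lies in $\mathcal{N}$.

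The crucial descent step is the head permutative cut $((t\;[x.u,y.v])\;w_1)\triangleright(t\;[x.(u\;w_1),y.(v\;w_1)])$, which produces the reduct $((t\;[x.(u\;w_1),y.(v\;w_1)])\;w_2\ldots w_n)$. Writing $u_0=(u\;w_1)$, $v_0=(v\;w_1)$, and $\bar{w}_0=w_2\ldots w_n$, this reduct fits the template of Lemma \ref{delta}; since $(u_0\;\bar{w}_0)=(u\;\bar{w})$ and similarly for $v$, the corresponding hypothesis term is still $H$. Because $\bar{w}_0$ is a nice subsequence of $\bar{w}$ and $lg(\bar{w}_0)<lg(\bar{w})$, the induction hypothesis on the first coordinate closes the case.

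The main obstacle is the cases in which the cut between $t$ and $[x.u,y.v]$ itself reduces, namely $t=\m a.t''$ (classical reduction) and $t=(s_0\;[x'.u',y'.v'])$ (inner permutative reduction). In the classical case, the reduct $(\m a.t''[a:=^{*}[x.u,y.v]]\;\bar{w})$ is treated via Lemma \ref{int}.5, so that it suffices to show $t''[a:=^{*}[x.u,y.v]][a:=^{*}\bar{w}]\in\mathcal{N}$, a term in which each occurrence $(a\;M)$ of $t''$ has become $(a\;((M\;[x.u,y.v])\;\bar{w}))$. One then wishes to reapply Lemma \ref{delta} at the strict subterm $M$ of $t$, using that the matching $(M\;[x.(u\;\bar{w}),y.(v\;\bar{w})])$ appears as a subterm of the reduct of $H$ under the classical step and is therefore in $\mathcal{N}$. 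The inner permutative case is handled analogously by recursion on the structure of $t$. Making this recursive invocation of Lemma \ref{delta} rigorous is the technical heart of the argument, and will likely require enlarging the induction measure by a structural size for $t$, or interleaving the statement with an auxiliary substitution lemma.
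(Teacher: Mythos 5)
Your reduction to one-step reducts with the measure $(lg(\bar{w}),\eta(H)+\eta(\bar{w}))$ handles the internal reductions, the logical cut $t=\omega_i s_0$, and the head permutative cut correctly (the last by the decrease of $lg(\bar{w})$, in the style of the proof of Lemma \ref{int}). But the two cases you defer are not a technicality to be postponed: they are the entire content of the lemma, and the repairs you sketch do not close them. In the classical case $t=\mu a.t''$, item 5 of Lemma \ref{int} reduces you to showing $t''[a:=^*[x.u,y.v]][a:=^*\bar{w}]\in\mathcal{N}$, i.e.\ strong normalization of $t''$ with \emph{every} occurrence $(a\;M)$ simultaneously rewritten to $(a\;((M\;[x.u,y.v])\;\bar{w}))$. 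Re-invoking the statement ``at the strict subterm $M$'' only yields that each $((M\;[x.u,y.v])\;\bar{w})$ is separately strongly normalizable; this does not imply strong normalization of the surrounding term, since reductions of $t''$ can duplicate these occurrences, substitute into them, and let them interact with the enclosing context (including further $\mu$'s and case-splits inside $t''$ that recreate the same configuration at unbounded depth). What is needed is a simultaneous, context-closed strengthening of the lemma, and neither adding a structural size of $t$ to the measure (which can grow when $t$ reduces) nor a substitution lemma of the kind of Lemma \ref{sub} obviously provides it. The nested permutative case $t=(s_0\;[x'.u',y'.v'])$ has the same character: after the inner commutation the sequence $[x'.(u'\,[x.u,y.v]),y'.(v'\,[x.u,y.v])]\;\bar{w}$ is no longer nice, so the statement cannot be reapplied in its given form.

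Note also that the paper does not prove this lemma by your route at all: it argues that from an infinite reduction sequence of $((t\;[x.u,y.v])\;\bar{w})$ one can construct an infinite reduction sequence of $(t\;[x.(u\;\bar{w}),y.(v\;\bar{w})])$ --- a global simulation argument --- and refers to \cite{dav2} for the details, which the introduction explicitly describes as ``a very technical lemma'' imported from that paper. Your proposal is therefore a genuinely different strategy, but as it stands it is incomplete exactly where the difficulty lies; carrying it through would require formulating and proving the simultaneous contextual generalization above, at which point you would essentially be redoing the work of \cite{dav2}.
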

 
\begin{proof} 
 This is proved by that, from an infinite sequence of reduction 
starting
from $((t \; [x.u,y.v])\;\bar{w})$, an infinite sequence of reduction 
starting
from $(t \; [x.(u\;\bar{w}),y.(v\;\bar{w})])$ can be constructed. A 
complete proof of this result is given in \cite{dav2} in
order to characterize the strongly normalizable terms. 
\end{proof} 

\begin{definition}
\begin{enumerate}
\item We define three functional constructions ($\to , \wedge$ and 
$\vee$)
on subsets of terms:
\begin{enumerate}
\item $K \to L = \{t \in \mathcal{T}/$ for each $u\in K$, $(t \; u) \in 
L\}$.
\item $K \wedge L = \{t \in \mathcal{T}/$ $(t\;\pi_1) \in K$ and 
$(t\;\pi_2) \in L \}$.
\item $K \vee L = \{t \in \mathcal{T}/$ for each $u,v \in \mathcal{N}$:
  If (for each $r \in K$,$s \in L$: $u[x:=r]\in \mathcal{N}$ and $
  v[y:=s]\in \mathcal{N})$, then $(t\;[x.u,y.v])\in \mathcal{N}\}$. 
\end{enumerate}
\item The set $\mathcal{R}$ of the reductibility candidates is the 
smallest
set of subsets of terms containing $ \mathcal{N}$ and closed by the 
functional
constructions $\to , \wedge $ and $\vee$.
\item Let $\bar{w}=w_1 ... w_n$ be a sequence of $\mathcal{E}$-terms, 
we
say that $\bar{w}$ is a good sequence iff for each $1\leq i \leq n$,
$ w_i$ is not in the form $[x.u,y.v]$.
\end{enumerate}
\end{definition}

\begin{lemma}\label{var}
If $R \in \mathcal{R}$, then:
\begin{enumerate}
\item $R \subseteq \mathcal{N}$.
\item $R$ contains the $\l$-variables.
\end{enumerate}
\end{lemma}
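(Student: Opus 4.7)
The natural attack is an induction on the construction of $R \in \mathcal{R}$. A naive induction, however, is too weak for part (2): to show a $\lambda$-variable $x$ lies in $K \to L$ one must verify $(x\;u) \in L$ for every $u \in K$, and the bare hypothesis ``$L$ contains the $\lambda$-variables'' says nothing about such applications. The remedy is to strengthen part (2) before inducting. I would prove simultaneously by induction on $R$ part (1) together with the auxiliary property $(2')$: for every $\lambda$-variable $x$ and every good sequence $\bar{w}$ of strongly normalizable $\mathcal{E}$-terms, $(x\;\bar{w}) \in R$. Taking $\bar{w}=\emptyset$ in $(2')$ recovers (2), and the base case $R = \mathcal{N}$ is immediate since a good sequence is in particular a nice one, so Lemma \ref{int}(1) applies.

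For the inductive step of $(2')$ in the cases $R = K \to L$ and $R = K \wedge L$, appending to a good sequence a term $u$, respectively $\pi_1$ or $\pi_2$, yields another good sequence, so $(x\;\bar{w}\,u)$ and $(x\;\bar{w}\,\pi_i)$ fall under $(2')$ applied to $L$ or $K$, which is precisely what the definitions of $K \to L$ and $K \wedge L$ require. For part (1) in the same two cases, I would evaluate $t \in K \to L$ at any $\lambda$-variable of $K$ (available by $(2')$ on $K$) and appeal to $L \subseteq \mathcal{N}$, and use $(t\;\pi_1) \in K \subseteq \mathcal{N}$ for $K \wedge L$, each time inferring the strong normalizability of $t$ from that of an application built from $t$.

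The case $R = K \vee L$ is where the shape of $(2')$ is designed to bite. For $(2')$, given $u,v \in \mathcal{N}$ satisfying the premise in the definition of $K \vee L$, extending $\bar{w}$ by $[x.u,y.v]$ produces a sequence that is no longer good, but is still nice; Lemma \ref{int}(1) then directly yields $(x\;\bar{w}\,[x.u,y.v]) \in \mathcal{N}$. This is exactly the point where the weaker ``good'' hypothesis is traded for the ``nice'' conclusion supplied by the lemma. For part (1) on $K \vee L$, I would test the membership of $t$ by taking $u = x$ and $v = y$ as $\lambda$-variables: the substitution premise collapses to $K \cup L \subseteq \mathcal{N}$ (IH), yielding $(t\;[x.x,y.y]) \in \mathcal{N}$, hence $t \in \mathcal{N}$.

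The main obstacle is to notice that (2) as stated is too weak to carry the induction on its own; once it is replaced by the spine statement $(2')$, which is tailored to the way sequences are extended under each of the three constructors, every clause becomes a short verification resting only on the definitions, the good-versus-nice distinction, and Lemma \ref{int}(1).
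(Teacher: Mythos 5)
Your proposal is correct and follows essentially the same route as the paper: the paper likewise proves, by simultaneous induction on $R$, that $R \subseteq \mathcal{N}$ together with the strengthened statement that $(x\;\bar{w}) \in R$ for every $\lambda$-variable $x$ and every good sequence $\bar{w} \in \mathcal{N}'^{<\omega}$, handling the $\vee$ case by appending $[x.u,y.v]$ to obtain a nice sequence and invoking Lemma \ref{int}, and testing membership with $(t\;[y.y,z.z])$ for part (1). The good-versus-nice trade-off you identify as the crux is exactly the mechanism the paper uses.
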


\begin{proof} 
We prove, by simultaneous induction, that $R \subseteq
\mathcal{N}$ and for each $\l$-variable $x$ and for each good sequence 
$\bar{w} \in
\mathcal{N}'^{<{\omega}}$, $(x\;\;\bar{w}) \in R$.
\begin{itemize}
  \item $R=\mathcal{N}$: trivial.
  \item $R=R_1\to R_2$: Let $t\in R$. By induction hypothesis, we have 
$x \in R_1$, then $(t\; x)\in R_2$,
therefore, by induction hypothesis, $(t\; x)\in \mathcal{N}$ hence 
$t\in \mathcal{N}$.

Let $\bar{w}\in \mathcal{N}'^{<{\omega}}$ be a good sequence and $v\in
  R_1$. Since $\bar{w}v$ is a good sequence, then, by induction
  hypothesis $(x\;\bar{w}v) \in R_2$, therefore $(x\;\bar{w}) \in R_1 
\to
  R_2$.
 \item $R=R_1\wedge R_2$: Let $t\in R$, then $(t \;\pi_i)\in R_i$ and,
 by induction hypothesis, $(t \;\pi_i)\in \mathcal{N}$, therefore $t\in
 \mathcal{N}$.

Let $\bar{w} \in \mathcal{N}'^{<{\omega}}$ be a good sequence,
then $\bar{w}\pi_i$ is also a good sequence and, by induction
hypothesis, $(x\;\bar{w}\pi_i) \in R_i$, therefore $(x\;\bar{w}) \in 
R$.

  \item $R=R_1\vee R_2$: Let $t\in R$ and $y,z$ two $\l$-variables. By 
induction
  hypothesis, we have, for each $u\in R_1\subseteq
  \mathcal{N}$ and $v\in R_2 \subseteq \mathcal{N}$,
  $y[y:=u] = u\in \mathcal{N}$ and $z[z:=v]= v\in \mathcal{N}$, then
  $(t\;[y.y,z.z]) \in \mathcal{N}$, therefore $t\in \mathcal{N}$.

 Let $\bar{w}\in \mathcal{N}'^{<{\omega}}$ be a good sequence and $u,v
 \in \mathcal{N}$ such that for each $r\in R_1, s\in R_2, u[x:=r]\in
 \mathcal{N}$ and $v[y:=s]\in \mathcal{N}$. We have $[x.u,y.v]\in
 \mathcal{N'}$ because $u$ and $v$ $\in \mathcal{N}$. Thus
 $\bar{w}\;[x.u,y.v]$ is a nice sequence, and by lemma \ref{int},
 $(x\;\bar{w}\;[x.u,y.v]) \in \mathcal{N}$, therefore $(x\;\bar{w})
 \in R$.
\end{itemize}
\end{proof}

\begin{notation} For $S\subseteq {\mathcal{N}'}^{<{\omega}}$, we define 
$S \to
 K=\{t\in \mathcal{T}/$ for each $\bar{w} \in S, (t \;\bar{w}) \in 
K\}$.
\end{notation}

\begin{definition}
A set $X\subseteq {\mathcal{N}'^{<\omega}}$ is said to be nice iff for
each $\bar{w}\in X$, $\bar{w}$ is a nice sequence.
\end{definition}

\begin{lemma}
Let $R \in \mathcal{R}$, then there exists a nice set $X$ such that $R= 
X \to \mathcal{N}$.
\end{lemma}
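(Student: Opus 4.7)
The plan is to proceed by induction on the construction of $R \in \mathcal{R}$, building the nice set $X$ in parallel with the inductive definition of the reducibility candidates.

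For the base case $R = \mathcal{N}$, I take $X = \{\emptyset\}$, the set whose sole element is the empty sequence. The empty sequence is vacuously nice, and by the notational convention $(t\;\emptyset) = t$, so $X \to \mathcal{N} = \mathcal{N}$.

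For the inductive step, suppose each $R_i \in \mathcal{R}$ has already been written as $R_i = Y_i \to \mathcal{N}$ for some nice set $Y_i$. I would treat the three functional constructions as follows. For $R = R_1 \to R_2$, set $X = \{u\,\bar{w} : u \in R_1,\ \bar{w} \in Y_2\}$; unfolding the definitions, $t \in X \to \mathcal{N}$ iff $((t\;u)\;\bar{w}) \in \mathcal{N}$ for every $u \in R_1$ and $\bar{w} \in Y_2$, which is exactly $(t\;u) \in R_2$ for every $u \in R_1$, i.e.\ $t \in R_1 \to R_2$. For $R = R_1 \wedge R_2$, set $X = \{\pi_1\bar{w} : \bar{w} \in Y_1\} \cup \{\pi_2\bar{w} : \bar{w} \in Y_2\}$; then $t \in X \to \mathcal{N}$ iff $(t\;\pi_i) \in R_i$ for $i=1,2$. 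For $R = R_1 \vee R_2$, take $X$ to be the set of one-element sequences $[x.u,y.v]$ where $u,v \in \mathcal{N}$ satisfy the substitution requirement of clause (c) in the definition: $u[x:=r] \in \mathcal{N}$ for each $r \in R_1$ and $v[y:=s] \in \mathcal{N}$ for each $s \in R_2$. The defining property of $R_1 \vee R_2$ then reads exactly as ``$(t\;\bar{w}) \in \mathcal{N}$ for every $\bar{w} \in X$.''

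The main point to verify throughout is that the constructed $X$ is genuinely a nice set. In the $\to$ case, $u \in R_1 \subseteq \mathcal{N}$ by Lemma \ref{var}, so $u$ is a term and, as such, never of the shape $[x.u',y.v']$; prepending it to a nice $\bar{w}$ therefore preserves niceness. In the $\wedge$ case, $\pi_1$ and $\pi_2$ are likewise not of the form $[x.u',y.v']$. In the $\vee$ case each element of $X$ is a singleton, hence trivially nice, and this is precisely the reason the notion of nice sequence was designed to permit the last term to have the shape $[x.u,y.v]$. No serious obstacle is expected: once the correct $X$ is guessed in each case, the equality $R = X \to \mathcal{N}$ is just an unfolding of the functional constructions, and the niceness check is the small piece of bookkeeping that makes the whole scheme coherent.
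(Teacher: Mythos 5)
Your proof is correct and follows essentially the same route as the paper: the same choice of $X$ in the base case and in the $\to$ and $\vee$ cases, with the $\wedge$ case (which the paper dismisses as ``similar'') and the niceness bookkeeping spelled out a bit more explicitly. Nothing further is needed.
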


\begin{proof} By induction on $R$.  
\begin{itemize} 
\item $R=\mathcal{N}$: Take $X=\{\emptyset\}$, it is clear that
$\mathcal{N}=\{\emptyset\} \to\mathcal{N}$.
 \item $R=R_1 \to R_2$: We
have $R_2=X_2 \to \mathcal{N}$ for a nice set $X_2$. Take $X=\{u\;
\bar{v}$ / $u \in R_1, \bar{v}\in X_2\}$. We have $u\; \bar{v}$ is a 
nice
sequence for all $u \in R_1$ and $\bar{v}\in X_2$. Then $X$ is a nice
set and we can easly check that $R = X \to \mathcal{N}$.

    \item $R=R_1 \wedge R_2$: Similar to the previous case. 
    \item $R=R_1 \vee R_2$: Take $X=\{[x.u,y.v]$ / for each $r\in R_1$
and $s\in R_2\;,\; u[x:=r] \in \mathcal{N}$ and $ v[y:=s] \in
\mathcal{N}\}$. We have $X$ is a nice set and, by definition, $R = X
\to \mathcal{N}$.
\end{itemize}
\end{proof}

\begin{remark}
Let $R \in \mathcal{R}$ and $X$ a nice set such that  $R= X \to
\mathcal{N}$. We can suppose that $\emptyset \in X$. Indeed, since $R
\subseteq \mathcal{N}$, we have also $R=X \cup \{ \emptyset \} \to 
\mathcal{N}$.
\end{remark}

\begin{definition}
Let $R \in \mathcal{R}$, we define $R^\perp= \cup \{X \; / \; R= X \to
\mathcal{N}$ and $X$ is a nice set $\}$.
\end{definition}

\begin{lemma}
Let $R \in \mathcal{R}$, then:
\begin{enumerate}
\item $R^\perp$ is a nice set.
\item $R= R^\perp \to \mathcal{N}$.
\end{enumerate} 
\end{lemma}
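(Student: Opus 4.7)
The plan is to unpack the definition of $R^\perp$ as a union of nice sets $X$ satisfying $R = X \to \mathcal{N}$, and use the previous lemma to guarantee the family is nonempty. This makes both assertions essentially formal.

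For part (1), I would observe that the property of being a nice set is defined pointwise: a set of sequences is nice iff each of its members is a nice sequence. Since a union of pointwise-defined classes is of the same form, the union $R^\perp$ of the family of nice sets $X$ with $R = X \to \mathcal{N}$ is itself a nice set. By the previous lemma that family is nonempty, so there is no vacuous issue.

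For part (2), I would prove the two inclusions separately. For $R \subseteq R^\perp \to \mathcal{N}$, take $t \in R$ and any $\bar{w} \in R^\perp$. By definition of $R^\perp$, $\bar{w}$ lies in some nice set $X$ with $R = X \to \mathcal{N}$, so $(t \; \bar{w}) \in \mathcal{N}$; hence $t \in R^\perp \to \mathcal{N}$. Conversely, pick any particular nice set $X_0$ with $R = X_0 \to \mathcal{N}$, which exists by the previous lemma. Then $X_0 \subseteq R^\perp$, so the containment $R^\perp \to \mathcal{N} \subseteq X_0 \to \mathcal{N} = R$ follows immediately from the contravariance of $(\cdot) \to \mathcal{N}$ in its left argument.

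There is no real obstacle here; the lemma is essentially a tautology once one notices that $R^\perp$ is by construction the largest nice set $X$ for which $R = X \to \mathcal{N}$ holds. The only point to be slightly careful about is to invoke the previous lemma to make sure the defining union is nonempty, so that $R^\perp$ is a well-defined nice set and the containment $X_0 \subseteq R^\perp$ used in the second inclusion has content.
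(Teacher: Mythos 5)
Your proof is correct and follows essentially the same route as the paper, which disposes of (1) as immediate from the definition and of (2) via the fact that $R = X_i \to \mathcal{N}$ for all $i$ in a (nonempty) family implies $R = \bigcup_i X_i \to \mathcal{N}$ --- exactly the two inclusions you spell out. Your explicit remark that the defining family must be nonempty (guaranteed by the preceding lemma) is a point the paper leaves implicit, and it is indeed needed for the inclusion $R^\perp \to \mathcal{N} \subseteq R$.
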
 

\begin{proof}
\begin{enumerate}
\item By definition.
\item This comes also from the fact that: If, for every $i \in I$,
$R=X_i \to \mathcal{N}$, then $R=\cup_{i \in I}X_i \to \mathcal{N}$.
\end{enumerate}
\end{proof}

\begin{remark}  For $R \in \mathcal{R}$, $R^\perp$ is simply the
greatest nice $X$ such that $R= X \to \mathcal{N}$. In fact any nice $X$ such
that $\emptyset \in X$ and $R= X \to \mathcal{N}$ would work as well
as $R^\perp$.
\end{remark}

\begin{lemma}\label{red}
Let $R \in \mathcal{R}$, $t \in R$ and $t \triangleright^* t'$. Then 
$t' \in R$ 
\end{lemma}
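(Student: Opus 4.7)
The plan is to prove the lemma by induction on the structure of $R \in \mathcal{R}$, using the inductive definition of $\mathcal{R}$ as the smallest class containing $\mathcal{N}$ and closed under $\to$, $\wedge$, $\vee$. In every inductive case the strategy is the same: take a would-be witness $t' \in R$, apply the appropriate eliminator $\varepsilon$ to it, observe that $(t\;\varepsilon) \triangleright^* (t'\;\varepsilon)$ since reduction is compatible with application of $\mathcal{E}$-terms, and invoke the induction hypothesis on the smaller candidate.

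The base case $R = \mathcal{N}$ is immediate: strong normalizability is preserved under reduction. For $R = R_1 \to R_2$, given $u \in R_1$, we have $(t\;u) \in R_2$, and $(t\;u) \triangleright^* (t'\;u)$, so the induction hypothesis applied to $R_2$ gives $(t'\;u) \in R_2$, hence $t' \in R$. The case $R = R_1 \wedge R_2$ is identical, testing with $\pi_1$ and $\pi_2$ in place of $u$ and using the induction hypothesis on $R_1$ and $R_2$.

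The case $R = R_1 \vee R_2$ is slightly different because the definition of $\vee$ lands in $\mathcal{N}$ rather than in a smaller candidate, but this is actually what makes it easy. Given $u, v \in \mathcal{N}$ satisfying the stability condition on $R_1, R_2$, we know $(t\;[x.u,y.v]) \in \mathcal{N}$ by hypothesis, and $(t\;[x.u,y.v]) \triangleright^* (t'\;[x.u,y.v])$, so the target term is a reduct of a strongly normalizable term and therefore itself strongly normalizable; hence $t' \in R$. No induction hypothesis is needed here; we only use that $\mathcal{N}$ itself is closed under reduction.

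I do not foresee any real obstacle: the only thing to notice is that one-step reduction of a subterm lifts to $(t\;\varepsilon) \triangleright (t'\;\varepsilon)$ in one step and hence $t \triangleright^* t'$ lifts to $(t\;\varepsilon) \triangleright^* (t'\;\varepsilon)$ by iteration, which is a standard contextual closure property of the reduction relation defined in Definition~2.1.4. Once this is available, the proof is just a walk through the three clauses of the definition of $\mathcal{R}$, each time reducing to a strictly smaller instance of the same lemma or to the trivial closure of $\mathcal{N}$.
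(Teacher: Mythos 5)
Your proof is correct, but it takes a genuinely different route from the paper's. You argue by induction on the generation of $R$ in $\mathcal{R}$, treating the three constructors $\to$, $\wedge$ and $\vee$ separately; the paper instead invokes the characterization $R = R^\perp \to \mathcal{N}$ established in the two lemmas immediately preceding this one, which collapses everything into a single uniform case: for $\bar{u} \in R^\perp$ one has $(t\;\bar{u}) \triangleright^* (t'\;\bar{u})$ and $(t\;\bar{u}) \in \mathcal{N}$, and $\mathcal{N}$ is reduction-closed. The underlying content is the same in both arguments, namely contextual closure of $\triangleright$ under application of eliminators together with the trivial reduction-closure of $\mathcal{N}$; only the decomposition differs. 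Your version is self-contained: it does not need the nice-set machinery or the $R^\perp$ construction at all, and it is the standard ``CR2''-style argument for reducibility candidates. Each of your three inductive cases is carried out correctly, and you rightly observe that the $\vee$ case needs no induction hypothesis because the defining clause of $K \vee L$ already lands in $\mathcal{N}$. What the paper's route buys is brevity at this point, since the structural induction has been amortized into the characterization lemma, which is needed anyway for the adequation lemma (where the hypotheses $\bar{v_j} \in I(B_j)^\perp$ appear); what your route buys is independence from that machinery. Either proof is acceptable.
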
 

\begin{proof}
Let  $\bar{u} \in R^\perp$. We have $(t \; \bar{u}) \triangleright^*
(t' \; \bar{u})$ and $(t \; \bar{u}) \in \mathcal{N}$, then $(t' \;
\bar{u}) \in \mathcal{N}$. We deduce that $t' \in R^\perp \to
\mathcal{N} = R$.
\end{proof}

\begin{remark}\label{rem} Let $R \in \mathcal{R}$, we have not in 
general
$\mathcal{N}\subseteq R$, but we can prove, by induction, that
$ \m a\mathcal{N} = \{\m a. t$ / $t \in \mathcal{N}$ and $a$ is not
free in $t\} \subseteq R$.
\end{remark}

\section{Proof of the theorem \ref{SN}}

\begin{definition}
An interpretation is a function $I$ from the propositional variables
 to $\mathcal{R}$, which we extend to any formula as follows:
 $I(\perp)=\mathcal{N}$, $I(A\to B)= I(A)\to I(B)$, $I(A\wedge B)=
 I(A)\wedge I(B)$ and $I(A\vee B)= I(A)\vee I(B)$.
\end{definition}
 
\begin{lemma}[Adequation lemma]\label{cle}
Let $\Gamma =\{x_i : A_i\}_{1\le i\le n}$ , $\Delta =\{a_j
: B_j\}_{1\le j\le m}$, $I$ an interpretation, $u_i \in
I(A_i)$, $\bar{v_j} \in I(B_j)^\perp$ and $t$ such that $\Gamma \vdash
t:A \,\,\, ; \, \Delta$. 

Then
$t[x_1:=u_1,...,x_n:=u_n,a_1:=^*\bar{v_1},...,a_m:=^*\bar{v_m}]\in
I(A)$.
\end{lemma}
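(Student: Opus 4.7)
The plan is to proceed by induction on the typing derivation (equivalently on the structure of $t$). Write $\theta$ for the combined substitution $[x_1:=u_1,\ldots,x_n:=u_n,a_1:=^*\bar{v_1},\ldots,a_m:=^*\bar{v_m}]$. The general pattern for each case will be: to show $t\theta \in I(A)$, unfold $I(A) = I(A)^\perp \to \mathcal{N}$ and verify that $(t\theta \, \bar{w}) \in \mathcal{N}$ for an arbitrary $\bar{w} \in I(A)^\perp$, combining the induction hypothesis with the appropriate clause of lemma \ref{int}.

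The axiom rule is immediate: $t\theta = u_i \in I(A_i)$. For $\to_i$, $t = \lambda x.t'$, take $u \in I(A)$ and nice $\bar{w} \in I(B)^\perp$; the IH applied to $t'$ with $\theta$ extended by $[x:=u]$ gives $t'\theta[x:=u] \in I(B)$, hence $(t'\theta[x:=u]\,\bar{w}) \in \mathcal{N}$, and lemma \ref{int}(2) applied to the nice sequence $u\bar{w}$ yields $((\lambda x.t'\theta \, u)\,\bar{w}) \in \mathcal{N}$. The $\to_e$, $\wedge_e^1$ and $\wedge_e^2$ cases are immediate from the definitions of $\to$ and $\wedge$. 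The $\wedge_i$ case is analogous to $\to_i$ using lemma \ref{int}(3). For $\vee_i^1$, $t = \omega_1 u$: given $p,q \in \mathcal{N}$ satisfying the freeness condition, instantiate it at $r := u\theta$ (which lies in $I(A)$ by IH) to obtain $p[x:=u\theta] \in \mathcal{N}$, and conclude by lemma \ref{int}(4).

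The classical rules follow the same pattern. For $abs_e$ with $t = \mu a.t'$ and goal type $A$, fix $\bar{w} \in I(A)^\perp$; applying the IH to $t'$ with $[a:=^*\bar{w}]$ appended gives $t'\theta[a:=^*\bar{w}] \in I(\perp) = \mathcal{N}$, and lemma \ref{int}(5) yields $(\mu a.t'\theta \, \bar{w}) \in \mathcal{N}$. For $abs_i$ with $t = (a\,u)$, one has $(a\,u)\theta = (a\,(u\theta\,\bar{v_a}))$; the IH gives $u\theta \in I(A)$, so $(u\theta\,\bar{v_a}) \in \mathcal{N}$ because $\bar{v_a} \in I(A)^\perp$, and no reduction occurs at the outer $(a\,\cdot)$, so the whole term lies in $\mathcal{N} = I(\perp)$.

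The main obstacle is the $\vee_e$ rule, $t = (u\,[x.v, y.w])$ with $u$ of type $A \vee B$ and $v,w$ of type $C$. Fix $\bar{r} \in I(C)^\perp$; the target is $((u\theta\,[x.v\theta, y.w\theta])\,\bar{r}) \in \mathcal{N}$. By lemma \ref{delta} it suffices to show $(u\theta\,[x.(v\theta\,\bar{r}), y.(w\theta\,\bar{r})]) \in \mathcal{N}$. Since $u\theta \in I(A)\vee I(B)$ by IH, this reduces to checking the two hypotheses of the $\vee$-clause for $P := (v\theta\,\bar{r})$ and $Q := (w\theta\,\bar{r})$. For arbitrary $r \in I(A), s \in I(B)$ the IH on $v$ and $w$ (with $\theta$ extended by $[x:=r]$ or $[y:=s]$) gives $v\theta[x:=r], w\theta[y:=s] \in I(C)$, from which $P[x:=r], Q[y:=s] \in \mathcal{N}$ follow by $\bar{r} \in I(C)^\perp$; the same argument with $r=x$, $s=y$ (free $\lambda$-variables, which belong to $I(A),I(B)$ by lemma \ref{var}) provides $P,Q \in \mathcal{N}$ themselves. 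This interplay of the $\perp$-duality of candidates with lemma \ref{delta}'s handling of commutative reductions is the critical step of the induction.
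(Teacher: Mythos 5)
Your proposal is correct and follows essentially the same route as the paper: induction on the typing derivation, unfolding $I(A)=I(A)^\perp\to\mathcal{N}$, invoking Lemma \ref{int} for the introduction and $\mu$-rules and Lemma \ref{delta} for $\vee_e$. You are in fact slightly more explicit than the paper in the $\vee_i$ and $\vee_e$ cases (e.g.\ checking $P,Q\in\mathcal{N}$ via $r=x$, $s=y$ and Lemma \ref{var}), which the paper leaves implicit.
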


\begin{proof} 
For each term $s$, we denote

$s[x_1:=u_1,...,x_n:=u_n,a_1:=^*\bar{v_1},...,a_m:=^*\bar{v_m}]$ by
$s'$.

We look at the last used rule in the derivation of $\Gamma \vdash t:A
\,\,\, ; \, \Delta$.
\begin{itemize}
\item ax, $\to_e$ and $\wedge^j_e$: Easy. 
\item $\to_i$: In this case $t=\l x.t_1$ with $\Gamma, x:C \vdash
  t_1:D\,\,\, ; \, \Delta$ and $ A=C \to D$. Let $u\in I(C)$ and
  $\bar{w}\in I(D)^\perp$. By induction hypothesis, we have 
$t'_1[x:=u]\in
  I(D)$, then $ (t'_1[x:=u] \; \bar{w})\in \mathcal{N}$, and,
  by lemma \ref{int}
  $((\l x.t'_1 \, u)\, \bar{w}) \in \mathcal{N}$. Therefore $(\l x.t'_1 
\,\, u)
  \in I(D)$, hence $\l x.t'_1 \in I(C) \to I(D) = I(A)$.
\item $\wedge_i$ and $\vee^j_i$: Similar to $\to_i$.
\item $\vee_e$: In this case
 $t=(t_1\;[x.u,y.v])$ with $\Gamma \vdash t_1:B\vee C \,\,\, ; \,
 \Delta$ , $\Gamma , x:B \vdash u:A\,\,\, ; \, \Delta$ and $\Gamma ,
 y:C \vdash v:A\,\,\, ; \, \Delta$. Let $r\in I(B)$ and $s\in
 I(C)$. By induction hypothesis, we have $t_1'\in I(B)\vee I(C)$,
 $u'[x:=r] \in I(A)$ and $v'[y:=s] \in I(A)$. Let $\bar{w}\in
 I(A)^\perp$, then $(u'[x:=r]\,\bar{w})\in \mathcal{N}$ and
 $(v'[y:=s]\,\bar{w}) \in \mathcal{N}$, therefore $( t_1'\,
 [x.(u'\bar{w}) ,y.(v'\bar{w})])\in \mathcal{N}$. By lemma
 \ref{delta}, $((t_1'\; [x.u' ,y.v'])\bar{w})\in \mathcal{N}$,
 therefore $(t'_1\; [x.u' ,y.v'])\in I(A)$.

\item $abs_e $: In this case $t=\mu a.u $
 and $\Gamma \vdash \m a.u : A\,\,\, ; \, \Delta$. Let $\bar{v} \in
 I(A)^\perp$. It suffies to prove that $((\mu a.u')\; \bar{v}) \in
 \mathcal{N} $. By induction hypothesis, $u'[a:=^*\bar{v}] \in I(\perp)=
 \mathcal{N}$, then, by lemma \ref{int}, $(\m a.u' \; \bar{v})\in
 \mathcal{N}$. Finally $(\mu a. u)' \in I(A)$.

\item $abs_i$: In this case $t= (a_j \;u)$ and $\Gamma \vdash  (a_j\;u) 
:
\perp\,\, ; \, \Delta', a_j:B_j$. We have to prove that $t' \in
\mathcal{N}$, by induction hypothesis, $u' \in I(B_j)$,  then $(u'\;
\bar{v_j}) \in \mathcal{N}$, therefore $t'=(a\;(u'\; \bar{v_j})) \in 
\mathcal{N}$.
\end{itemize}
\end{proof}

\begin{notation}
We denote $I_{\mathcal{N}}$ the interpretation such that, for each
propositional variable $X$, $I_{\mathcal{N}}(X) = \mathcal{N}$.\\
\end{notation}

\begin{proof}[of theorem \ref{SN}]: If $x_1:A_1,...,x_n:A_n \vdash
t:A;a_1:B_1,...,a_m:B_m$, then, by the lemma \ref{var}, $x_i \in
I_{\mathcal{N}}(A_i)$, and, by definition, $\emptyset \in
I_{\mathcal{N}}(B_j)^\perp$. Therefore by lemma \ref{cle},
$t=t[x_1:=x_1,...,x_n:=x_n,a_1:=^*\emptyset,...,a_m:=^*\emptyset] \in
I_{\mathcal{N}}(A)$ and finally, by lemma \ref{var}, $t \in
\mathcal{N}$. 
\end{proof}

\begin{remark} We can give now another proof of remark \ref{rem}:
``if $R \in \mathcal{R}$, the $\m a.\mathcal{N}\subseteq R$''. Let $t=
\l z.\m a.z$, we have $\vdash t : \perp \to p$ for every propositional
variable $p$. By lemma \ref{cle}, for every $R \in \mathcal{R}$, $t
\in
\mathcal{N} \to R$, then, for every $u \in \mathcal{N}$, $(t\;u) \in
R$, therefore, by lemma \ref{red}, $\m a.u \in R$.
\end{remark}


\begin{thebibliography}{99}

\bibitem{and} Y. Andou. {\em Church-Rosser property of simple
reduction for full first-order classical natural deduction.} Annals of
Pure and Applied logic 119 (2003) 225-237.

\bibitem{davnou} R. David and K. Nour. {\em A short proof of the
strong normalization of the simply typed $\l \m$}-calculus. Schedae
Informaticae vol.12, pp. 27-33, 2003.

\bibitem{dav2} R. David and K. Nour. {\em A short proof of the Strong
Normalization of Classical Natural Deduction with Disjunction}. Journal 
of symbolic Logic, vol 68, num 4, pp 1277-1288, 2003.

\bibitem{Gir} J.-Y. Girard, Y. Lafont, P. Taylor.  Proofs and types.
{\it Cambridge University Press\/}, 1986.

\bibitem{deG2} P. de Groote. {\em Strong normalization of classical
natural deduction with disjunction.} In 5th International Conference
on typed lambda calculi and applications, TLCA'01. LNCS (2044),
pp. 182-196.  Springer Verlag, 2001.

\bibitem{JoMa} F. Joachimski and R. Matthes. {\em Short proofs of
normalization for the simply-typed lambda-calculus, permutative
conversions and G\"odel's T}. Archive for Mathematical Logic 42,
pp 59-87 (2003).

\bibitem{matthes} R. Matthes {\em Non-strictly positive fixed-points
for classical natural deduction}. Manuscript, 2003.

\bibitem{nour} K. Nour and  K. Saber {\em Church-Russer property of
full propositional classical natural deduction}. Manuscript, 2004.

\bibitem{Par1} M. Parigot {\em $\lambda \mu$-calculus: An algorithm
interpretation of classical natural deduction.}
 Lecture Notes in Artificial Intelligence (624),
pp. 190-201. Springer Verlag 1992.


\bibitem{Par2} M. Parigot. {\em Proofs of strong normalization for
second order classical natural deduction.} Journal of Symbolic Logic,
62 (4), pp.  1461-1479, 1997.

\end{thebibliography}
\end{document}